\documentclass[10pt]{amsart}
\usepackage{amssymb}
\usepackage{bm}
\usepackage{graphicx}
\usepackage[centertags]{amsmath}
\usepackage{amsfonts}
\usepackage{amsthm}
\usepackage{amsbsy}
\usepackage{mathtools}
\usepackage{mathrsfs}
\usepackage[all]{xy}
\linespread{1.18}


\newtheorem{thm}{Theorem}
\newtheorem{cor}[thm]{Corollary}
\newtheorem{lem}[thm]{Lemma}

\newtheorem{prop}[thm]{Proposition}
\newtheorem{claim}[thm]{Claim}

\newtheorem*{Th1'}{Theorem 1$'$}

\theoremstyle{definition}

\newcommand{\rr}{\mathbb{R}}

\newcommand{\ee}{\varepsilon}

\newcommand{\meg}{\geqslant}
\newcommand{\mik}{\leqslant}
\newcommand{\ave}{\mathbb{E}}

\newcommand{\bbo}{\boldsymbol{\Omega}}
\newcommand{\cala}{\mathcal{A}}
\newcommand{\calf}{\mathcal{F}}
\newcommand{\cals}{\mathcal{S}}
\newcommand{\bcalf}{\bm{\mathcal{F}}}

\newcommand{\bbp}{\mathbf{P}}
\newcommand{\bx}{\mathbf{x}}
\newcommand{\by}{\mathbf{y}}
\newcommand{\bz}{\mathbf{z}}

\begin{document}

\title{A concentration inequality for product spaces}

\author{Pandelis Dodos, Vassilis Kanellopoulos and Konstantinos Tyros}

\address{Department of Mathematics, University of Athens, Panepistimiopolis 157 84, Athens, Greece}
\email{pdodos@math.uoa.gr}

\address{National Technical University of Athens, Faculty of Applied Sciences,
Department of Mathematics, Zografou Campus, 157 80, Athens, Greece}
\email{bkanel@math.ntua.gr}

\address{Mathematics Institute, University of Warwick, Coventry, CV4 7AL, UK}
\email{k.tyros@warwick.ac.uk}

\thanks{2010 \textit{Mathematics Subject Classification}: 60E15, 28A35, 60G42.}
\thanks{\textit{Key words}: concentration inequalities, product spaces, martingale difference sequences.}
\thanks{The third author was supported by ERC grant 306493.}

\maketitle


\begin{abstract}
We prove a concentration inequality which asserts that, under some mild regularity conditions, every random
variable defined on the product of sufficiently many probability spaces exhibits pseudorandom behavior.
\end{abstract}


\section{Introduction}

\numberwithin{equation}{section}

Our goal in this paper is to prove a concentration inequality for product spaces which is somewhat different in spirit when compared
with the well-known concentration inequalities discovered by Talagrand \cite{Tal1,Tal2}. Roughly speaking, it asserts that under
some mild  regularity conditions, every random variable defined on the product of sufficiently many probability spaces exhibits
pseudorandom behavior.

To state this inequality we need to introduce some pieces of notation. Let $n$ be a positive integer and let
$(\Omega_1,\calf_1,\mathbb{P}_1),\dots,(\Omega_n,\calf_n,\mathbb{P}_n)$ be a finite sequence of probability spaces.
By $(\bbo,\bcalf,\bbp)$ we denote their product. More generally, for every nonempty subset $I$ of $\{1,\dots,n\}$ by
$(\bbo_I,\bcalf_{\!I},\mathbf{P}_{\!I})$ we denote the product of the spaces $\langle(\Omega_i,\calf_i,\mathbb{P}_i):i\in I\rangle$.
In particular, we have
\begin{equation} \label{e1.1}
\boldsymbol{\Omega}=\prod_{i=1}^n\Omega_i \ \text{ and } \ \boldsymbol{\Omega}_I=\prod_{i\in I} \Omega_i.
\end{equation}
(By convention, $\boldsymbol{\Omega}_{\emptyset}$ stands for the empty set.)

Now let $f\colon\boldsymbol{\Omega}\to\rr$ be an integrable random variable and $I\subseteq \{1,\dots,n\}$ such that $I$ and
$I^{\mathsf{c}}\coloneqq \{1,\dots,n\}\setminus I$ are nonempty. For every $\mathbf{x}\in\boldsymbol{\Omega}_I$ let
$f_{\mathbf{x}}\colon\boldsymbol{\Omega}_{I^{\mathsf{c}}}\to\rr$ be the section of $f$ at $\bx$, that is,
$f_{\mathbf{x}}(\mathbf{y})=f\big( (\mathbf{x},\mathbf{y})\big)$ for every $\by\in\bbo_{I^{\mathsf{c}}}$.
Fubini's theorem asserts that the random variable $\mathbf{x}\mapsto \ave(f_{\mathbf{x}})$ is integrable and satisfies
\begin{equation} \label{e1.2}
\int \ave(f_{\mathbf{x}}) \, d\mathbf{P}_{\!I} = \ave(f).
\end{equation}
Beyond this basic information, not much can be said at this level of generality. This random variable is rather amorphous.

However, our main result shows that if $f\in L_p(\bbo,\bcalf,\bbp)$ for some $p>1$ and $n$ is sufficiently large,
then one can find a set $I$ of coordinates of cardinality proportional to $n$, such that the random variable
$\bbo_I\ni \mathbf{x}\mapsto \ave(f_{\mathbf{x}})$ is highly concentrated around its mean. Specifically, we have the following theorem.
\begin{thm} \label{t1}
Let $0<\ee\mik 1$ and $1<p\mik 2$, and set
\begin{equation} \label{e1.3}
c(\ee,p)= \frac{1}{4}\,\ee^{\frac{2(p+1)}{p}} (p-1).
\end{equation}
Also let $n$ be a positive integer with $n\meg 2/c(\ee,p)$ and let $(\bbo,\bcalf,\bbp)$ be the product of a finite
sequence $(\Omega_1,\calf_1,\mathbb{P}_1),\dots,(\Omega_n,\calf_n,\mathbb{P}_n)$ of probability spaces. Then for every
$f\in L_p(\bbo,\bcalf,\bbp)$ with $\|f\|_{L_p}\mik 1$ there exists an interval $J$ of\, $\{1,\dots,n\}$
with $J^{\mathsf{c}}\neq\emptyset$ and
\begin{equation} \label{e1.4}
|J|\meg c(\ee,p) \, n
\end{equation}
such that for every nonempty $I\subseteq J$ we have
\begin{equation} \label{e1.5}
\mathbf{P}_{\!I} \big( \{ \mathbf{x}\in\boldsymbol{\Omega}_I: |\ave(f_{\bx}) - \ave(f)|\mik \ee \}\big) \meg 1-\ee.
\end{equation}
\end{thm}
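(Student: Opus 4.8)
The plan is to deduce the probabilistic statement \eqref{e1.5} from an $L_p$ estimate and to locate the interval $J$ by a pigeonhole argument over a partition of $\{1,\dots,n\}$ into consecutive blocks. For a nonempty $K\subseteq\{1,\dots,n\}$ write $\ave(f\mid\bcalf_{\!K})$ for the conditional expectation of $f$ given the coordinates in $K$; identifying $\bbo_K$ with the corresponding sub-$\sigma$-algebra of $\bcalf$, the random variable $\bx\mapsto\ave(f_{\bx})$ on $\bbo_K$ is exactly $\ave(f\mid\bcalf_{\!K})$, and I set $g_K\coloneqq\ave(f\mid\bcalf_{\!K})-\ave(f)$. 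First I would reduce \eqref{e1.5} to a single estimate for $I=J$. Indeed, if $I\subseteq J$ then $g_I=\ave(g_J\mid\bcalf_{\!I})$ by the tower property, so conditional Jensen gives $\|g_I\|_{L_p}\mik\|g_J\|_{L_p}$; hence by Markov's inequality
\begin{equation*}
\mathbf P_{\!I}\big(\{\,|g_I|>\ee\,\}\big)\mik \ee^{-p}\,\|g_I\|_{L_p}^p\mik \ee^{-p}\,\|g_J\|_{L_p}^p.
\end{equation*}
Thus it suffices to produce a proper interval $J$ with $|J|\meg c(\ee,p)\,n$ and $\|g_J\|_{L_p}^p\mik\ee^{p+1}$, since then the right-hand side is at most $\ee$ for every nonempty $I\subseteq J$.

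Next I would fix an integer $m\meg2$ and partition $\{1,\dots,n\}$ into consecutive intervals $J_1,\dots,J_m$, writing $J_j=\{a_{j-1}+1,\dots,a_j\}$ with $0=a_0<a_1<\dots<a_m=n$. Let $\mathcal H_j$ be the $\sigma$-algebra generated by the coordinates $1,\dots,a_j$, so that $M_j\coloneqq\ave(f\mid\mathcal H_j)$ is a martingale with $M_0=\ave(f)$, $M_m=f$, and differences $D_j\coloneqq M_j-M_{j-1}$ summing to $g\coloneqq f-\ave(f)$. The point of using intervals is the following identity, which is where independence of the factors enters: since $\bcalf_{\!J_j}\subseteq\mathcal H_j$ while $\mathcal H_{j-1}$ depends only on coordinates disjoint from $J_j$, conditioning $D_j$ on $\bcalf_{\!J_j}$ turns the first term into $\ave(f\mid\bcalf_{\!J_j})$ and the second into the constant $\ave(f)$, whence
\begin{equation*}
\ave\big(D_j\mid\bcalf_{\!J_j}\big)=\ave\big(f\mid\bcalf_{\!J_j}\big)-\ave(f)=g_{J_j}.
\end{equation*}
By conditional Jensen this yields $\|g_{J_j}\|_{L_p}\mik\|D_j\|_{L_p}$, so it is enough to make some $\|D_j\|_{L_p}^p$ small.

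The heart of the matter is a budget estimate for $\sum_{j=1}^m\|D_j\|_{L_p}^p$. Applying Hölder's inequality pointwise with conjugate exponents $2/p$ and $2/(2-p)$ to the $m$ numbers $|D_j(\omega)|^p$ gives $\sum_{j=1}^m|D_j|^p\mik m^{1-p/2}\,S(g)^p$, where $S(g)=\big(\sum_{j=1}^m D_j^2\big)^{1/2}$ is the martingale square function; integrating, $\sum_{j=1}^m\|D_j\|_{L_p}^p\mik m^{1-p/2}\,\|S(g)\|_{L_p}^p$. Here I would invoke the $L_p$ square-function inequality for martingales, in the sharp form $\|S(g)\|_{L_p}\mik(p-1)^{-1/2}\,\|g\|_{L_p}$ valid for $1<p\mik2$, together with $\|g\|_{L_p}\mik2\|f\|_{L_p}\mik2$. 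Combining and averaging,
\begin{equation*}
\min_{1\mik j\mik m}\|g_{J_j}\|_{L_p}^p\mik\frac1m\sum_{j=1}^m\|D_j\|_{L_p}^p\mik m^{-p/2}\big(2(p-1)^{-1/2}\big)^p.
\end{equation*}
Choosing $m$ of the order of $1/c(\ee,p)=4(p-1)^{-1}\ee^{-2(p+1)/p}$ makes the right-hand side at most $\ee^{p+1}$, while each block has length about $n/m\approx c(\ee,p)\,n$; taking $J$ to be the block attaining the minimum finishes the argument.

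I expect the main obstacle to be the square-function inequality with the precise constant $(p-1)^{-1/2}$: it is exactly this constant (which blows up as $p\to1^+$, reflecting the failure of $S(g)\in L_1$ for general $L_1$ martingales) that produces the factor $p-1$ in $c(\ee,p)$, and the numerical factor $\tfrac14$ is calibrated so that the two competing requirements on $m$ — smallness of $m^{-p/2}\big(2(p-1)^{-1/2}\big)^p$ and largeness of the block length $n/m$ — are simultaneously met. The remaining difficulty is bookkeeping: $m$ must be an integer and the blocks cannot be made exactly equal, but the hypothesis $n\meg2/c(\ee,p)$ leaves enough room to absorb the rounding and to guarantee $|J|\meg c(\ee,p)\,n$ together with $J^{\mathsf c}\neq\emptyset$.
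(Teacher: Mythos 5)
Your architecture runs parallel to the paper's (reduce \eqref{e1.5} to an $L_p$ bound on $\ave(f\,|\,\bcalf_{\!J})-\ave(f)$ for a long interval $J$, locate $J$ by pigeonhole over the coordinate martingale, and let the factor $p-1$ enter through a sharp martingale inequality), and several of your steps are correct and nicely packaged: the reduction to $I=J$ via the tower property and conditional Jensen, and the identity $\ave(D_j\,|\,\bcalf_{\!J_j})=g_{J_j}$, which condenses the paper's Claim \ref{c7} and Lemma \ref{l6} into one line. The genuine gap is the inequality you call the heart of the matter: the square-function estimate $\|S(g)\|_{L_p}\mik (p-1)^{-1/2}\|g\|_{L_p}$ for $1<p\mik 2$ is \emph{false}; the sharp constant in $\|S(g)\|_{L_p}\mik c_p\|g\|_{L_p}$ blows up like $(p-1)^{-1}$, not $(p-1)^{-1/2}$, as $p\to 1^{+}$. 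Concretely, for the double-or-nothing martingale $f_k=\prod_{j\mik k}(1+\epsilon_j)$ with independent Rademacher signs one computes $\|f_n\|_{L_p}^p=2^{n(p-1)}$ and $\|S(f_n)\|_{L_p}^p\sim 3^{-p/2}\tfrac{2^p-1}{2^{p-1}-1}\,2^{n(p-1)}$, so $\|S(f_n)\|_{L_p}/\|f_n\|_{L_p}$ tends to a limit of order $(p-1)^{-1}$; already at $p=1.5$ this limit is about $1.55>(p-1)^{-1/2}\approx 1.41$, and at $p=1.1$ it is about $7.2$ versus $3.2$. What does hold with constant $(p-1)^{-1/2}$ is the strictly weaker $\ell_2(L_p)$ statement $\big(\sum_j\|D_j\|_{L_p}^2\big)^{1/2}\mik (p-1)^{-1/2}\big\|\sum_j D_j\big\|_{L_p}$ (the Ricard--Xu inequality, Proposition \ref{p3} of the paper); it is weaker because $\sum_j\|D_j\|_{L_p}^2\mik\|S\|_{L_p}^2$ for $p\mik 2$ by reverse Minkowski in $L_{p/2}$. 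Fortunately your argument never needs the pointwise square function: apply H\"{o}lder to the scalar sequence $(\|D_j\|_{L_p}^p)_{j=1}^m$ instead, getting $\sum_j\|D_j\|_{L_p}^p\mik m^{1-p/2}\big(\sum_j\|D_j\|_{L_p}^2\big)^{p/2}$, and then invoke Proposition \ref{p3}. This yields exactly the bound you wrote down, from a true premise.

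A secondary, quantitative problem remains even after that repair. Your budget becomes $\min_j\|g_{J_j}\|_{L_p}^p\mik m^{-p/2}\big(2(p-1)^{-1/2}\big)^p$, the factor $2$ coming from $\|f-\ave(f)\|_{L_p}\mik 2$; forcing this below $\ee^{p+1}$ requires $m\meg 4(p-1)^{-1}\ee^{-2(p+1)/p}=1/c(\ee,p)$, which consumes the entire factor $\tfrac14$ in \eqref{e1.3}. But any partition of $\{1,\dots,n\}$ into $m\meg 1/c(\ee,p)$ blocks contains a block of length at most $n/m\mik c(\ee,p)n$, and you cannot control which block attains the minimum, so the rounding does not close (e.g.\ $1/c=10.5$ and $n=21$ leaves no room for $11$ blocks of length $\meg 2$). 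The paper avoids this in Lemma \ref{l5} by making $\ave(f)$ itself the first martingale difference, so the relevant norm is $\|f\|_{L_p}\mik 1$ rather than $\|f-\ave(f)\|_{L_p}\mik 2$; the factor of $4$ thus saved is then spent on the floor functions. Do the same: adjoin $D_0=\ave(f)$ to your difference sequence, so that $\sum_{j\meg 1}\|D_j\|_{L_p}^2\mik(p-1)^{-1}\|f\|_{L_p}^2\mik(p-1)^{-1}$, after which $m$ of order $\tfrac14\cdot 1/c(\ee,p)$ suffices and the bookkeeping closes with room to spare. With these two corrections your proof is a valid variant of the paper's: a one-shot partition plus averaging in place of the paper's energy-increment pigeonhole over the full filtration.
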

Of course, the case of random variables in $L_p(\bbo,\bcalf,\bbp)$ for $p>2$ is reduced to the case $p=2$. In other words, Theorem \ref{t1}
is valid for any $p>1$. Also notice that Theorem 1 can be reformulated as follows.
\begin{Th1'}
Let $\ee, p, n$ be as in Theorem \emph{\ref{t1}} and let $X_1,\dots,X_n$ be a finite sequence of independent random variables defined
on a probability space $(\Omega,\mathcal{F},\mathbb{P})$. Let $Y$ be another random variable which can be expressed as
$Y=F(X_1,\dots,X_n)$ for some measurable function $F$, and assume that $\ave(|Y|^p)\mik 1$. Then there exists an interval $J$ of\,
$\{1,\dots,n\}$ with $J^{\mathsf{c}}\neq\emptyset$ and satisfying \eqref{e1.4}, such that for every nonempty $I\subseteq J$ we have
\begin{equation} \label{e1.6}
\mathbb{P}\big( |\ave(Y\, |\, \mathcal{F}_I)-\ave(Y)|\mik \ee \big)\meg 1-\ee
\end{equation}
where\, $\ave(Y\, |\, \mathcal{F}_I)$ stands for the conditional expectation of\, $Y$ with respect to the
$\sigma\text{-algebra}$ $\mathcal{F}_I\coloneqq \sigma\big(\{X_i: i\in I\}\big)$.
\end{Th1'}
We proceed to discuss another consequence of Theorem \ref{t1} which is of ``geometric" nature. Let $\bbo$ be as in Theorem \ref{t1}
and let $A$ be a measurable event of $\bbo$. Also let $I\subseteq\{1,\dots,n\}$ such that $I$ and $I^{\mathsf{c}}$ are nonempty, and
observe that if $f$ is the indicator function of $A$, then for every $\bx\in\bbo_I$ the quantity $\ave(f_{\bx})$ is the probability of the section
$A_{\bx}=\{\by\in\bbo_{I^{\mathsf{c}}}: (\bx,\by)\in A\}$ of $A$ at $\bx$. Taking into account this remark, we obtain the following corollary.
\begin{cor} \label{c2}
Let $\ee, p, n$ and $(\bbo,\bcalf,\bbp)$ be as in Theorem \emph{\ref{t1}}. Then for every $A\in\bcalf$ there exists an interval $J$
of\, $\{1,\dots,n\}$ with $J^{\mathsf{c}}\neq\emptyset$ and satisfying \eqref{e1.4}, such that for every nonempty $I\subseteq J$ we have
\begin{equation} \label{e1.7}
\mathbf{P}_{\!I} \Big( \big\{ \mathbf{x}\in\boldsymbol{\Omega}_I: |\bbp_{\!I^{\mathsf{c}}}(A_{\bx}) - \bbp(A)|\mik \ee \bbp(A)^{1/p}
\big\}\Big) \meg 1-\ee.
\end{equation}
\end{cor}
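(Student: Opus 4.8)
The plan is to deduce Corollary \ref{c2} directly from Theorem \ref{t1} by a normalization argument, so there is essentially no new analytic content. First I would set $f=\mathbf{1}_A$, the indicator function of $A$, and dispose of the degenerate case $\bbp(A)=0$ separately: here Fubini's theorem gives $\int \bbp_{\!I^{\mathsf{c}}}(A_{\bx})\,d\mathbf{P}_{\!I}=\bbp(A)=0$, and since the integrand is nonnegative we get $\bbp_{\!I^{\mathsf{c}}}(A_{\bx})=0=\bbp(A)$ for $\mathbf{P}_{\!I}$-almost every $\bx$, so \eqref{e1.7} holds trivially (with right-hand side $0$) for any interval $J$ with $J^{\mathsf{c}}\neq\emptyset$ and $|J|\meg c(\ee,p)n$. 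Thus I may assume $\bbp(A)>0$.

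The key observation is that $\mathbf{1}_A$ is typically not normalized, since $\|\mathbf{1}_A\|_{L_p}^p=\int \mathbf{1}_A\,d\bbp=\bbp(A)$. I would therefore apply Theorem \ref{t1} not to $f$ itself but to the rescaled random variable $g\coloneqq \bbp(A)^{-1/p}\,\mathbf{1}_A$, which satisfies $\|g\|_{L_p}=1$ and hence meets the hypothesis of the theorem. This produces an interval $J$ of $\{1,\dots,n\}$ with $J^{\mathsf{c}}\neq\emptyset$ and $|J|\meg c(\ee,p)\,n$ such that for every nonempty $I\subseteq J$,
\begin{equation*}
\mathbf{P}_{\!I}\big(\{\bx\in\bbo_I: |\ave(g_{\bx})-\ave(g)|\mik \ee\}\big)\meg 1-\ee.
\end{equation*}

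It then remains to translate this estimate back into a statement about $A$. Since $g_{\bx}=\bbp(A)^{-1/p}\,(\mathbf{1}_A)_{\bx}$, one has $\ave(g_{\bx})=\bbp(A)^{-1/p}\,\ave\big((\mathbf{1}_A)_{\bx}\big)$ and likewise $\ave(g)=\bbp(A)^{-1/p}\,\ave(\mathbf{1}_A)$, so that the inequality $|\ave(g_{\bx})-\ave(g)|\mik \ee$ is equivalent to $|\ave((\mathbf{1}_A)_{\bx})-\ave(\mathbf{1}_A)|\mik \ee\,\bbp(A)^{1/p}$. Invoking the remark preceding the statement, namely $\ave((\mathbf{1}_A)_{\bx})=\bbp_{\!I^{\mathsf{c}}}(A_{\bx})$ and $\ave(\mathbf{1}_A)=\bbp(A)$, the event inside $\mathbf{P}_{\!I}$ above coincides exactly with the event in \eqref{e1.7}, and the corollary follows. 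The only points that require care are the separate treatment of the null case and the bookkeeping of the factor $\bbp(A)^{1/p}$; the latter is precisely what converts the flat deviation threshold $\ee$ of Theorem \ref{t1} into the $L_p$-scaled threshold $\ee\,\bbp(A)^{1/p}$ appearing in \eqref{e1.7}, and I expect no genuine obstacle beyond this rescaling.
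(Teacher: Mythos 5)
Your proposal is correct and follows exactly the route the paper intends: the corollary is deduced from Theorem \ref{t1} by applying it to the normalized indicator $\bbp(A)^{-1/p}\mathbf{1}_A$ and using the identification $\ave\big((\mathbf{1}_A)_{\bx}\big)=\bbp_{\!I^{\mathsf{c}}}(A_{\bx})$ stated in the remark preceding the corollary, which is precisely how the factor $\bbp(A)^{1/p}$ enters \eqref{e1.7}. Your separate treatment of the null case $\bbp(A)=0$ is a sensible (and necessary) detail that the paper leaves implicit.
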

Versions of Corollary \ref{c2} for subsets of the product of certain finite probability spaces were proved in \cite{DKT1,DKT2}
and were applied to combinatorial problems (we will briefly comment on these applications in Subsection 4.1, and for a more complete
exposition we refer the reader to \cite{DK}). Theorem \ref{t1} was motivated by these results and was found in an effort to abstract
their probabilistic features. We expect that Theorem \ref{t1} will in turn facilitate further applications, possibly even beyond the
combinatorial context of \cite{DKT1,DKT2}.

We also note that Corollary \ref{c2} does not hold true for $p=1$ (thus, the range of $p$ in Theorem \ref{t1} is optimal).
To see this, let $n$ be an arbitrary positive integer and for every $i\in \{1,\dots,n\}$ let $(\Omega_i,\calf_i,\mathbb{P}_i)$
be a probability space with the property that there exists a measurable event $A_i$ of $\Omega_i$ with $\mathbb{P}_i(A_i)=1/2$.
As above, we denote by $(\bbo,\bcalf,\bbp)$ the product of the spaces
$(\Omega_1,\calf_1,\mathbb{P}_1),\dots,(\Omega_n,\calf_n,\mathbb{P}_n)$ and we set $A=A_1\times\cdots\times A_n\in \bcalf$.
Notice that if $I$ is a subset of $\{1,\dots,n\}$ such that $I$ and $I^{\mathsf{c}}$ are nonempty, then for every $\bx\in\bbo_I$
we have $\bbp_{\!I^{\mathsf{c}}}(A_{\bx})=0$ if $\bx\notin\prod_{i\in I} A_i$ while $\bbp_{\!I^{\mathsf{c}}}(A_{\bx})=2^{-n+|I|}$
if $\bx\in\prod_{i\in I} A_i$. It follows, in particular, that for every $\bx\in\bbo_I$ we have
$|\bbp_{\!I^{\mathsf{c}}}(A_{\bx})-\bbp(A)|\meg \bbp(A)$ and so if $p=1$, then the probability of no section of $A$ can approximate
the probability of $A$ with the desired accuracy.

Some final remarks on the proof of Theorem \ref{t1} which is based on a certain estimate for martingale difference sequences.
Martingales are, of course, very useful tools for obtaining concentration inequalities (see, e.g., \cite{L,MS} and the references
therein). However, the most interesting part of the argument is how one locates the desired interval $J$. This is achieved
with a variant of Szemer\'{e}di's regularity lemma~\cite{Sz}, especially as described by Tao in \cite{Tao}.

\subsubsection*{Acknowledgments}
We would like to thank the anonymous referees for their comments and remarks, and for suggesting Theorem 1$'$.


\section{An estimate for martingale difference sequences}

Recall that a finite sequence $(d_i)_{i=1}^n$ of random variables is said to be a \emph{martingale difference sequence} if it is of the form
\begin{equation} \label{e2.1}
d_i=f_i-f_{i-1}
\end{equation}
where $(f_i)_{i=1}^n$ is a martingale and $f_0=0$. Clearly, for any $p\meg 1$, every martingale difference sequence in $L_p$ is a monotone basic
sequence. Also notice that martingale difference sequences are orthogonal in $L_2$. Hence, for every martingale difference sequence
$(d_i)_{i=1}^n$ in $L_2$ we have
\begin{equation} \label{e2.2}
\Big( \sum_{i=1}^n \|d_i\|^2_{L_2} \Big)^{1/2} = \big\| \sum_{i=1}^n d_i\big\|_{L_2}.
\end{equation}
We will need the following extension of this basic fact.
\begin{prop} \label{p3}
Let $(\Omega,\calf,\mathbb{P})$ be a probability space and $1<p \mik 2$. Then for every martingale difference
sequence $(d_i)_{i=1}^n$ in $L_p(\Omega,\calf,\mathbb{P})$ we have
\begin{equation} \label{e2.3}
\Big( \sum_{i=1}^n \|d_i\|^2_{L_p} \Big)^{1/2} \mik \Big(\frac{1}{p-1}\Big)^{1/2} \cdot \big\| \sum_{i=1}^n d_i\big\|_{L_p}.
\end{equation}
\end{prop}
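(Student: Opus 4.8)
The plan is to establish the one-step estimate
\[
\big\|f_k\big\|_{L_p}^2 \meg \big\|f_{k-1}\big\|_{L_p}^2 + (p-1)\,\|d_k\|_{L_p}^2 \qquad (1\mik k\mik n),
\]
where $(f_i)_{i=0}^n$ is the martingale with $d_i=f_i-f_{i-1}$, $f_0=0$, associated filtration $(\calf_i)_{i=0}^n$ and $\ave(d_i\,|\,\calf_{i-1})=0$, and then to sum over $k$. Telescoping and using $f_0=0$ gives $\big\|\sum_{i=1}^n d_i\big\|_{L_p}^2=\|f_n\|_{L_p}^2\meg (p-1)\sum_{i=1}^n\|d_i\|_{L_p}^2$, which is exactly \eqref{e2.3}. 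Thus everything reduces to the one-step estimate, a quantitative form of the $2$-uniform convexity of $L_p$ in the range $1<p\mik 2$, in which the martingale hypothesis plays the decisive role.

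To prove it, fix $k$, write $g=f_{k-1}$ and $d=d_k$, and consider
\[
\phi(t)=\big\|g+t\,d\big\|_{L_p}^2=\Big(\ave\,|g+td|^p\Big)^{2/p},\qquad t\in\rr .
\]
Then $\phi(0)=\|g\|_{L_p}^2$ and $\phi(1)=\|f_k\|_{L_p}^2$, so it suffices to show that (i)~$\phi'(0)=0$ and (ii)~$\phi''(t)\meg 2(p-1)\|d\|_{L_p}^2$ for every $t$. Granting these, the function $t\mapsto \phi(t)-(p-1)t^2\|d\|_{L_p}^2$ is convex with vanishing derivative at $0$, hence attains its minimum there; comparing its values at $t=1$ and $t=0$ yields the one-step estimate. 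Claim (i) is where the martingale structure enters: differentiating under the integral gives $\tfrac{d}{dt}\ave|g+td|^p\big|_{t=0}=p\,\ave\big[|g|^{p-1}\operatorname{sgn}(g)\,d\big]$, and since $|g|^{p-1}\operatorname{sgn}(g)$ is $\calf_{k-1}$-measurable while $\ave(d\,|\,\calf_{k-1})=0$, this expectation vanishes.

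For claim (ii), set $G(t)=\ave|g+td|^p=\phi(t)^{p/2}$. Differentiating twice,
\[
\phi''=\tfrac{2}{p}\Big[\big(\tfrac{2}{p}-1\big)G^{\frac{2}{p}-2}(G')^2+G^{\frac{2}{p}-1}G''\Big],
\]
and since $p\mik 2$ the first bracketed term is nonnegative, so $\phi''\meg \tfrac{2}{p}G^{\frac{2}{p}-1}G''$ with $G''(t)=p(p-1)\,\ave\big[\,|g+td|^{p-2}d^2\big]$. To bound this expectation from below I will use the pointwise factorization $|d|^p=\big(|g+td|^{p-2}d^2\big)^{p/2}\big(|g+td|^{p}\big)^{(2-p)/2}$ together with H\"older's inequality for the conjugate exponents $2/p$ and $2/(2-p)$; this gives $\ave|d|^p\mik \big(\ave[|g+td|^{p-2}d^2]\big)^{p/2}\,G(t)^{(2-p)/2}$, whence $\ave[|g+td|^{p-2}d^2]\meg \|d\|_{L_p}^2\,G(t)^{-(2-p)/p}$. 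Substituting and using $G^{\frac2p-1}=G^{(2-p)/p}$ yields $\phi''(t)\meg 2(p-1)\|d\|_{L_p}^2$, as required.

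The main technical obstacle is that, for $1<p<2$, the weight $|x|^{p-2}$ is singular at $x=0$, so the formal computation of $G'$ and $G''$ and the differentiation under the integral are not immediately justified. I will handle this by the standard regularization replacing $|x|^p$ by $(x^2+\delta)^{p/2}$: the regularized integrand is smooth, the martingale property still annihilates the first derivative at $t=0$ (the relevant factor $(g^2+\delta)^{(p-2)/2}g$ remains $\calf_{k-1}$-measurable), and the same H\"older argument bounds the regularized second derivative below. Letting $\delta\downarrow 0$ and invoking monotone and dominated convergence recovers (i) and (ii) for the original functional, after which the telescoping argument completes the proof. Alternatively, one may prove a scalar two-point inequality of the shape $|a+b|^p\meg |a|^p+p|a|^{p-1}\operatorname{sgn}(a)\,b+c_p(\cdots)$ and integrate, thereby avoiding differentiation under the integral entirely.
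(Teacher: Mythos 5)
Your proposal is correct in substance but takes a genuinely different route from the paper, which in fact offers no proof of Proposition \ref{p3} at all: it simply cites Ricard and Xu \cite{RX}, who deduce the estimate from the Ball--Carlen--Lieb two-point inequality (Proposition \ref{p4}) by a ``pseudo-differentiation'' argument. Your skeleton --- the one-step inequality $\|f_k\|_{L_p}^2\meg\|f_{k-1}\|_{L_p}^2+(p-1)\|d_k\|_{L_p}^2$ followed by telescoping from $f_0=0$ --- is exactly the Ricard--Xu reduction, but you then prove the one-step inequality directly: $\phi(t)=\|g+td\|_{L_p}^2$ has $\phi'(0)=0$ by the martingale property, and $\phi''\meg 2(p-1)\|d\|_{L_p}^2$ via your H\"{o}lder factorization of $|d|^p$ with exponents $2/p$ and $2/(2-p)$. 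Both that factorization and the discarding of the nonnegative term $(\tfrac2p-1)G^{2/p-2}(G')^2$ are correct, and the argument cleanly isolates where $p\mik 2$ and where the martingale structure enter. What your route buys is a self-contained, elementary proof bypassing \cite{BCL} entirely; what it gives up is the robustness of the pseudo-differentiation device, which Ricard and Xu designed precisely to avoid differentiating in settings (noncommutative $L_p$) where your computation is unavailable.

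The one genuine gap is in the handling of regularity, and the $\delta$-regularization alone does not close it. For $1<p<2$ a function $d\in L_p$ need not lie in $L_2$, so even after replacing $|x|^p$ by $\psi_\delta(x)=(x^2+\delta)^{p/2}$ the formal second derivative $G_\delta''(t)=\ave\big[\psi_\delta''(g+td)\,d^2\big]$ can be $+\infty$: taking $g=0$ and $t=0$ the integrand equals $p\,\delta^{(p-2)/2}d^2$, which is not integrable when $d\notin L_2$. Thus $\phi_\delta$ need not be twice differentiable and the convexity argument does not literally apply. The standard repair is to first reduce to bounded data: truncate and re-center, setting $d^{(N)}=d\mathbf{1}_{\{|d|\mik N\}}-\ave\big(d\mathbf{1}_{\{|d|\mik N\}}\,\big|\,\calf_{k-1}\big)$, which preserves the martingale-difference property and converges to $d$ in $L_p$, and similarly truncate $g$. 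For bounded $g,d$ and $\delta>0$ everything is smooth and dominated, your H\"{o}lder step still produces the true $\|d\|_{L_p}$ on the right-hand side, and both limits $\delta\downarrow 0$ and $N\to\infty$ pass through because both sides of the one-step inequality are continuous in $L_p$. With that insertion the proof is complete.
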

The estimate in \eqref{e2.3} is optimal, and was recently proved by Ricard and Xu \cite{RX} who deduced it (via an elegant pseudo-differentiation argument)
from the following sharp uniform convexity inequality for $L_p$ spaces.
\begin{prop}[\cite{BCL}] \label{p4}
Let $(\Omega,\Sigma,\mu)$ be an arbitrary measure space and $1<p \mik 2$. Then for every $x,y\in L_p(\Omega,\Sigma,\mu)$ we have
\begin{equation} \label{e2.4}
\|x\|_{L_p}^2 + (p-1) \|y\|_{L_p}^2 \mik \frac{\|x+y\|_{L_p}^2+\|x-y\|_{L_p}^2}{2}.
\end{equation}
\end{prop}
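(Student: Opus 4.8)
The plan is to derive \eqref{e2.4} from a single pointwise inequality for scalars and then lift it to $L_p$ by soft arguments. The inequality I would isolate is the two-point estimate
\[
\frac{|s+t|^p+|s-t|^p}{2}\meg \big(s^2+(p-1)t^2\big)^{p/2},
\]
valid for all real $s,t$ and $1<p\mik 2$. (For the application to real-valued martingale differences this scalar case suffices; the complex case follows by a further reduction, discussed below.) Granting this pointwise inequality, everything else is a matter of combining three standard facts: monotonicity of integration, the \emph{reverse} triangle inequality in $L_{p/2}$, and the power-mean inequality.

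For the lifting, apply the pointwise inequality with $s=x(\omega)$ and $t=y(\omega)$ and integrate against $\mu$. Since integrating the $(p/2)$-th power of a nonnegative function computes $\|\cdot\|_{L_{p/2}}^{p/2}$, this yields
\[
\frac{\|x+y\|_{L_p}^p+\|x-y\|_{L_p}^p}{2}\meg \big\| |x|^2+(p-1)|y|^2 \big\|_{L_{p/2}}^{p/2}.
\]
I would then treat the two sides separately. On the right, since $p/2\mik 1$ the functional $\|\cdot\|_{L_{p/2}}$ obeys the reverse triangle inequality on nonnegative functions, so
\[
\big\| |x|^2+(p-1)|y|^2 \big\|_{L_{p/2}}\meg \big\| |x|^2 \big\|_{L_{p/2}}+(p-1)\big\| |y|^2 \big\|_{L_{p/2}} = \|x\|_{L_p}^2+(p-1)\|y\|_{L_p}^2,
\]
using the identity $\big\||x|^2\big\|_{L_{p/2}}=\|x\|_{L_p}^2$. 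On the left, writing $a=\|x+y\|_{L_p}$ and $b=\|x-y\|_{L_p}$, the power-mean inequality for the exponents $p\mik 2$ gives $\big(\tfrac{a^p+b^p}{2}\big)^{1/p}\mik\big(\tfrac{a^2+b^2}{2}\big)^{1/2}$, hence $\big(\tfrac{a^p+b^p}{2}\big)^{2/p}\mik \tfrac{a^2+b^2}{2}$. Raising the displayed integral inequality to the power $2/p$ and chaining it with the power-mean bound at the left end and the reverse-triangle bound at the right end produces exactly \eqref{e2.4}.

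The main obstacle is the scalar inequality itself. I would reduce it to one variable by homogeneity: the case $s=0$ is immediate from $(p-1)^{p/2}\mik 1$, and otherwise dividing by $|s|^p$ and setting $u=|t|/|s|$ reduces matters to showing $\tfrac{(1+u)^p+|1-u|^p}{2}\meg (1+(p-1)u^2)^{p/2}$ for $u\meg 0$. The delicate point is that at $u=0$ both sides equal $1$ and agree to second order (their second derivatives are both $p(p-1)$), so the inequality is \emph{tight} there and no crude estimate will do; one must verify that the fourth-order term of the left side dominates that of the right (this is where the restriction $p\mik 2$ enters), while also controlling the global profile, the left side growing like $u^p$ and the right like $(p-1)^{p/2}u^p$ as $u\to\infty$. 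I would establish it by a careful analysis of the sign of the difference through its derivatives; this is precisely the sharp convexity estimate of Ball--Carlen--Lieb, and it carries all the real work. Finally, for complex scalars one reduces to the real inequality by fixing $|s|,|t|$ and optimizing over the relative phase: since $p/2\mik 1$, the left side is concave as a function of the cross term $\mathrm{Re}(s\bar t)$ and is therefore minimized at the extreme (real-aligned) phases, after which the real one-variable analysis applies.
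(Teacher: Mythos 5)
The paper offers no proof of this proposition: it is imported verbatim from Ball--Carlen--Lieb \cite{BCL}, with \cite{RX} cited for details, so your proposal must stand on its own. The soft part of your argument is correct and is in fact exactly the route taken in \cite{BCL}: integrating the pointwise two-point bound gives $\tfrac{1}{2}\big(\|x+y\|_{L_p}^p+\|x-y\|_{L_p}^p\big)\meg \big\||x|^2+(p-1)|y|^2\big\|_{L_{p/2}}^{p/2}$, the reverse Minkowski inequality in $L_{p/2}$ (legitimate since $p/2\mik 1$ and the summands are nonnegative) splits the right-hand side into $\|x\|_{L_p}^2+(p-1)\|y\|_{L_p}^2$, and the power-mean inequality $\big(\tfrac{a^p+b^p}{2}\big)^{2/p}\mik\tfrac{a^2+b^2}{2}$ turns the $p$-th powers on the left into squares, yielding \eqref{e2.4}. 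The complex-phase reduction via concavity of $c\mapsto\tfrac{1}{2}\big((A+c)^{p/2}+(A-c)^{p/2}\big)$ in $c=2\,\mathrm{Re}(s\bar{t})$ is also fine.

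The gap is that the scalar inequality $\tfrac{1}{2}\big((1+u)^p+|1-u|^p\big)\meg\big(1+(p-1)u^2\big)^{p/2}$, which you correctly identify as carrying all the content, is never actually proved. Your observations --- agreement of the two sides to second order at $u=0$, dominance of the fourth-order Taylor coefficient, and the comparison of growth rates as $u\to\infty$ --- are all accurate, but they control only the two ends of the range; ``a careful analysis of the sign of the difference through its derivatives'' is a plan, not an argument, and for a sharp inequality that is tight at the origin this is precisely where proofs tend to break. A clean way to close the gap, essentially the one in \cite{BCL}, avoids derivatives entirely: for $0\mik u\mik 1$ one has $\tfrac{1}{2}\big((1+u)^p+(1-u)^p\big)=\sum_{k\meg 0}\binom{p}{2k}u^{2k}\meg 1+\tfrac{p(p-1)}{2}u^2$, since each $\binom{p}{2k}$ with $k\meg 1$ is a product of $p>0$, $p-1\meg 0$ and an even number of nonpositive factors, hence nonnegative; on the other side, concavity of $a\mapsto(1+a)^{p/2}$ gives $\big(1+(p-1)u^2\big)^{p/2}\mik 1+\tfrac{p(p-1)}{2}u^2$. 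The case $u>1$ then reduces to $v=1/u\mik 1$ upon multiplying through by $u^{-p}$, using that $v^2+(p-1)-\big(1+(p-1)v^2\big)=(v^2-1)(2-p)\mik 0$. With this substituted for your derivative analysis, the proof is complete.
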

For details, as well as noncommutative extensions, we refer to \cite{RX}.


\section{Proof of Theorem \ref{t1}}

Let $n$ be a positive integer and let $(\Omega_1,\calf_1,\mathbb{P}_1),\dots,(\Omega_n,\calf_n,\mathbb{P}_n)$ be a finite
sequence of probability spaces. Recall that by $(\bbo,\bcalf,\bbp)$ we denote their product.
For notational simplicity, for every $m\in \{1,\dots,n\}$ we shall denote by $(\bbo_m,\bcalf_m,\bbp_m)$ the
product of the spaces $(\Omega_1,\calf_1,\mathbb{P}_1),\dots,(\Omega_m,\calf_m,\mathbb{P}_m)$.
Notice that the $\sigma$-algebra $\bcalf_m$ is not comparable to $\bcalf$, but it may be ``lifted" to the full
product $\bbo$ using the natural projection $\pi_m\colon \bbo\to \bbo_m$. Specifically, for every $m\in \{1,\dots,n\}$ we set
\begin{equation} \label{e3.1}
\cals_m=\{ \pi_m^{-1}(A): A\in\bcalf_m\}.
\end{equation}
Observe that $\cals_m=\{ A\times \Omega_{m+1}\times\cdots\times \Omega_n: A\in\bcalf_m\}$ if $m<n$ while $\cals_n=\bcalf$.
It follows, in particular, that $(\cals_m)_{m=1}^n$ is an increasing sequence of sub-$\sigma$-algebras of $\bcalf$, and so
for every $1<p\mik 2$ and every $f\in L_p(\bbo,\bcalf,\bbp)$ with $\|f\|_{L_p}\mik 1$ the sequence
$\ave(f \, | \, \cals_1),\dots, \ave(f \, | \, \cals_n)$ is a finite martingale which is contained in the unit ball of
$L_p(\bbo,\bcalf,\bbp)$. We have the following property which is satisfied by all finite martingales of this form.
\begin{lem} \label{l5}
Let $0<\theta\mik 1$ and $1<p\mik 2$, and let $n$ be a positive integer with
\begin{equation} \label{e3.2}
n\meg 8\,\theta^{-2}(p-1)^{-1}.
\end{equation}
Also let $(\Omega, \calf,\mathbb{P})$ be a probability space and $(\cala_m)_{m=1}^n$ an increasing finite sequence
of $\text{sub-}\sigma\text{-algebras}$ of $\calf$. Finally, let $g\in L_p(\Omega,\calf,\mathbb{P})$ with $\|g\|_{L_p}\mik 1$.
Then there exist $i,j\in\{1,\dots,n-1\}$ with
\begin{equation} \label{e3.3}
j-i\meg \big(4^{-1}\theta^2 (p-1)\big)\, n
\end{equation}
such that
\begin{equation} \label{e3.4}
\|\ave(g \, | \, \cala_j)-\ave(g \, | \, \cala_i)\|_{L_p}\mik \theta.
\end{equation}
In particular, for every $m,l\in \{i,\dots,j\}$ we have $\|\ave(g \, | \, \cala_m)-\ave(g \, | \, \cala_l)\|_{L_p}\mik 2\theta$.
\end{lem}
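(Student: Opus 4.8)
The plan is to work with the martingale $f_m\coloneqq\ave(g\mid\cala_m)$, $m=1,\dots,n$, attached to the filtration $(\cala_m)_{m=1}^n$. Since conditional expectation is a contraction on $L_p$ we have $\|f_m\|_{L_p}\mik\|g\|_{L_p}\mik 1$ for every $m$. The whole lemma then reduces to locating $i,j$ with $1\mik i<j\mik n-1$ and $j-i\meg 4^{-1}\theta^2(p-1)n$ across which the martingale barely moves, that is, $\|f_j-f_i\|_{L_p}\mik\theta$. Granting this, the ``in particular'' clause is automatic: for $i\mik m\mik j$ we have $\ave(f_j\mid\cala_m)=f_m$ and $\ave(f_i\mid\cala_m)=f_i$, whence $f_m-f_i=\ave(f_j-f_i\mid\cala_m)$ and, by contraction, $\|f_m-f_i\|_{L_p}\mik\|f_j-f_i\|_{L_p}\mik\theta$; the triangle inequality then gives $\|f_m-f_l\|_{L_p}\mik 2\theta$ for all $m,l\in\{i,\dots,j\}$.

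The key device is to pass to a coarsening of the martingale. I would fix breakpoints $0=t_0<t_1<\dots<t_M=n-1$ and look at the subsampled martingale $(f_{t_r})_{r=0}^M$ with $f_{t_0}\coloneqq 0$; its increments $D_r\coloneqq f_{t_r}-f_{t_{r-1}}$ form a martingale difference sequence with $\sum_{r=1}^M D_r=f_{n-1}$. Applying Proposition \ref{p3} to $(D_r)_{r=1}^M$ yields at once
\[
\sum_{r=1}^M\|D_r\|_{L_p}^2\mik\frac{1}{p-1}\,\Big\|\sum_{r=1}^M D_r\Big\|_{L_p}^2=\frac{1}{p-1}\,\|f_{n-1}\|_{L_p}^2\mik\frac{1}{p-1}.
\]
This is the $L_p$-substitute for the $L_2$ Pythagorean identity: it converts the mere boundedness of the martingale into a uniform bound on the total ``energy'' of the block increments. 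The reason for coarsening is precisely that now each $\|D_r\|_{L_p}$ is itself a summand and can be controlled one block at a time, rather than being reconstructed from one-step increments through the lossy bound $\|D_r\|_{L_p}\mik\sum_{m}\|f_m-f_{m-1}\|_{L_p}$, which is all that a per-step use of Proposition \ref{p3} would provide.

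With the energy bound in hand I would extract a good block by averaging: among $M$ nonnegative numbers with sum at most $(p-1)^{-1}$, some one of them is at most $\frac{1}{M(p-1)}$. If the breakpoints are arranged so that there are about $M\approx\theta^{-2}(p-1)^{-1}$ blocks, each of length about $n/M$, then this bound is $\mik\theta^2$, and the block $\{t_{r^\ast-1}{+}1,\dots,t_{r^\ast}\}$ realizing it has $\|f_{t_{r^\ast}}-f_{t_{r^\ast-1}}\|_{L_p}\mik\theta$ and length $\approx n/M\meg 4^{-1}\theta^2(p-1)n$; one then sets $i=t_{r^\ast-1}$, $j=t_{r^\ast}$. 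The two requirements — $M$ large enough for the averaging threshold to beat $\theta^2$, and $M$ small enough for the blocks to be long — only ask that $M$ lie in the window $[\theta^{-2}(p-1)^{-1},\,4\theta^{-2}(p-1)^{-1}]$, of multiplicative width $4$, so an admissible integer $M$ exists.

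The part I expect to be the main obstacle is the boundary bookkeeping, and it is exactly what the factor $8$ (rather than the bare $4$) in the hypothesis on $n$ pays for. To keep the clean constant $(p-1)^{-1}$ in the energy bound, the sum must telescope from $f_{t_0}=0$, which forces the lowest breakpoint $t_0=0$; but $0$ is not an admissible left endpoint (there is no $\cala_0$), and likewise $j$ must not reach $n$. I would therefore reserve the first block $\{1,\dots,t_1\}$ with $t_1\meg 1$ as a short ``leftover'', excluded from the averaging, and partition the remaining indices $\{t_1{+}1,\dots,n-1\}$ into $M-1$ admissible blocks each of length $\lceil 4^{-1}\theta^2(p-1)n\rceil$. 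Averaging over these $M-1\meg\theta^{-2}(p-1)^{-1}$ interior blocks still gives a threshold $\mik\theta^2$, every selected endpoint lies in $\{1,\dots,n-1\}$, and a short computation shows that the total length demanded, $(M-1)\lceil 4^{-1}\theta^2(p-1)n\rceil$, stays below $n-2$ as soon as $n\meg 8\theta^{-2}(p-1)^{-1}$, leaving room for the leftover block. This reduces the feasibility of the whole scheme to the inequality $\theta^{-2}(p-1)^{-1}\meg 1$, which holds trivially since $\theta\mik 1$ and $p\mik 2$; the value $8$ is what makes the argument run all the way down to this boundary case.
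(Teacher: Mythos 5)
Your proposal is correct and is essentially the paper's own argument: the paper also coarsens the martingale into roughly $\theta^{-2}(p-1)^{-1}$ blocks of length about $4^{-1}\theta^2(p-1)n$, applies Proposition~\ref{p3} to the resulting martingale difference sequence (with the same device of an initial block starting from $0$), and handles the ``in particular'' clause via the same conditional-expectation contraction identity. The only difference is presentational --- the paper runs the pigeonhole step as a proof by contradiction (assuming every admissible block increment exceeds $\theta$ and contradicting $\|g\|_{L_p}\mik 1$), which is just the contrapositive of your direct averaging.
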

The argument in the proof of Lemma \ref{l5} is, essentially, the $L_p$-version of the ``energy increment strategy" devised
in the proof of Theorem 2.11 in \cite{Tao}. Further applications of this $L_p$-method are given in \cite{DKK}.
\begin{proof}[Proof of Lemma \emph{\ref{l5}}]
We argue by contradiction. So, assume that for every pair $i,j\in \{1,\dots,n-1\}$ satisfying \eqref{e3.3} we have that
$\|\ave(g \, | \, \cala_j)-\ave(g \, | \, \cala_i)\|_{L_p} >\theta$. Set $\ell=\lfloor \theta^{-2}(p-1)^{-1}\rfloor+1$ and notice
that $\lfloor (n-2)/\ell\rfloor\meg 1$. Moreover, for every $k\in \{1,\dots,\ell+1\}$ let $i_k=(k-1)\lfloor (n-2)/\ell\rfloor+1$.
With these choices, for every $k\in\{1,\dots,\ell\}$ we have $1\mik i_k < i_{k+1}\mik n-1$ and
\begin{equation} \label{e3.5}
i_{k+1}-i_k = \Big\lfloor \frac{n-2}{\ell}\Big\rfloor \meg \frac{n}{2\ell} \meg \Big( \frac{\theta^2 (p-1)^2}{4} \Big) n
\end{equation}
which implies, by our assumption that the lemma is false, that
\begin{equation} \label{e3.6}
\|\ave(g \, | \, \cala_{i_{k+1}})-\ave(g \, | \, \cala_{i_k})\|_{L_p} >\theta.
\end{equation}
We set $d_1=\ave(g \, | \, \cala_{i_1})$ and $d_{k+1}=\ave(g \, | \, \cala_{i_{k+1}})-\ave(g \, | \, \cala_{i_k})$
for every $k\in\{1,\dots,\ell\}$, and we observe that the sequence $(d_k)_{k=1}^{\ell+1}$ is a martingale difference sequence
in $L_p(\Omega,\calf,\mathbb{P})$. Therefore, by Proposition \ref{p3}, we obtain that
\begin{eqnarray} \label{e3.7}
\ \ \ \ \ 1 \!\!\!\! & < & \sqrt{p-1}\theta\sqrt{\ell} \ \stackrel{\eqref{e3.6}}{<} \sqrt{p-1} \Big(\sum_{k=1}^{\ell}
\| \ave(g \, | \, \cala_{i_{k+1}})-\ave(g \, | \, \cala_{i_k})\|_{L_p}^2\Big)^{1/2} \\
& \mik & \sqrt{p-1} \Big(\sum_{k=1}^{\ell+1} \|d_k\|_{L_p}^2\Big)^{1/2} \stackrel{\eqref{e2.3}}{\mik}
\big\| \sum_{k=1}^{\ell+1} d_k \big\|_{L_p} = \|\ave(g \, | \, \cala_{i_{\ell+1}})\|_{L_p}  \mik \|g\|_{L_p} \nonumber
\end{eqnarray}
which contradicts, of course, our hypothesis that $\|g\|_{L_p}\mik 1$.

Finally, let $1\mik i < j\mik n$ and notice that for every $i\mik l\mik m \mik j$ we have
\[ \ave(g \, | \, \cala_m)-\ave(g \, | \, \cala_l)\! =\! \ave(\ave(g \, | \, \cala_j)-\ave(g \, | \, \cala_i)\, | \, \cala_m) -
\ave(\ave(g \, | \, \cala_j)-\ave(g \, | \, \cala_i)\, | \, \cala_l) \]
which yields that $\|\ave(g \, | \, \cala_m)-\ave(g \, | \, \cala_l)\|_{L_p}\mik 2 \|\ave(g \, | \, \cala_j)-\ave(g \, | \, \cala_i)\|_{L_p}$.
The proof of Lemma \ref{l5} is completed.
\end{proof}
We will also need the following lemma. In its proof, and in the rest of this paper, we will follow the common practice
when proving inequalities and we will ignore measurability issues since they can be resolved with standard arguments.
\begin{lem} \label{l6}
Let $n$ be a positive integer and $(\Omega_1,\calf_1,\mathbb{P}_1),\dots,(\Omega_n,\calf_n,\mathbb{P}_n)$ a finite sequence
of probability spaces, and denote by $(\bbo,\bcalf,\bbp)$ their product. Also let $I\subseteq \{1,\dots,n\}$ and assume that
$I$ and $I^{\mathsf{c}}$ are nonempty. Then for every $p\meg 1$ and every $g,h\in L_p(\bbo,\bcalf,\bbp)$ we have
\begin{equation} \label{e3.8}
\int \|g_{\bx}-h_{\bx}\|_{L_1}^p \, d\bbp_{\!I} \mik \|g-h\|_{L_p}^p.
\end{equation}
\end{lem}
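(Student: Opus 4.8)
The plan is to set $F\coloneqq g-h$ and to recognize that the inequality \eqref{e3.8} is simply a combination of Jensen's inequality and Fubini's theorem, made available by the observation that, thanks to the product structure, $\bbp_{\!I^{\mathsf{c}}}$ is a \emph{probability} measure on $\bbo_{I^{\mathsf{c}}}$. Thus it suffices to prove that $\int \|F_{\bx}\|_{L_1}^p \, d\bbp_{\!I} \mik \|F\|_{L_p}^p$ and then substitute $F=g-h$.

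First I would fix $\bx\in\bbo_I$ and recall that, by definition, $\|F_{\bx}\|_{L_1}=\int_{\bbo_{I^{\mathsf{c}}}} |F_{\bx}(\by)| \, d\bbp_{\!I^{\mathsf{c}}}(\by)$. Since $\bbp_{\!I^{\mathsf{c}}}$ has total mass one and the map $t\mapsto t^p$ is convex on $[0,\infty)$ for $p\meg 1$, Jensen's inequality gives the pointwise (in $\bx$) estimate
\[ \|F_{\bx}\|_{L_1}^p = \Big( \int_{\bbo_{I^{\mathsf{c}}}} |F_{\bx}| \, d\bbp_{\!I^{\mathsf{c}}} \Big)^p \mik \int_{\bbo_{I^{\mathsf{c}}}} |F_{\bx}|^p \, d\bbp_{\!I^{\mathsf{c}}} = \|F_{\bx}\|_{L_p}^p. \]

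Next I would integrate this over $\bx\in\bbo_I$ with respect to $\bbp_{\!I}$ and apply Fubini's theorem to the nonnegative measurable function $|F|^p$ on the product $\bbo=\bbo_I\times\bbo_{I^{\mathsf{c}}}$, obtaining
\[ \int \|F_{\bx}\|_{L_1}^p \, d\bbp_{\!I} \mik \int_{\bbo_I} \int_{\bbo_{I^{\mathsf{c}}}} |F_{\bx}(\by)|^p \, d\bbp_{\!I^{\mathsf{c}}} \, d\bbp_{\!I} = \int_{\bbo} |F|^p \, d\bbp = \|F\|_{L_p}^p. \]
Substituting $F=g-h$ yields exactly \eqref{e3.8}.

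There is no genuine obstacle in this argument; the only point worth flagging is that the whole estimate hinges on $\bbp_{\!I^{\mathsf{c}}}$ being a probability measure, so that Jensen's inequality applies with constant $1$ and no loss is incurred — this is precisely what the product hypothesis guarantees, and it is also the reason the inequality is sharp in general. The measurability of the map $\bx\mapsto\|F_{\bx}\|_{L_1}$ needed to form the outer integral is covered by the blanket convention stated immediately before the lemma.
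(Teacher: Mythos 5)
Your argument is correct and is essentially identical to the paper's proof: Jensen's inequality applied to the probability measure $\bbp_{\!I^{\mathsf{c}}}$ on each section, followed by integration over $\bx$ and Fubini's theorem. The substitution $F=g-h$ is purely cosmetic.
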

\begin{proof}
Notice first that, by Fubini's theorem,
\begin{equation} \label{e3.9}
\|g-h\|_{L_p}^p = \int \Big( \int |g_{\bx}-h_{\bx}|^p \, d\bbp_{\!I^{\mathsf{c}}} \Big) \, d\bbp_{\!I}.
\end{equation}
On the other hand, by Jensen's inequality, for every $\bx\in\bbo_I$ we have
\begin{equation} \label{e3.10}
\|g_{\bx}-h_{\bx}\|_{L_1}^p = \Big( \int |g_{\bx}-h_{\bx}| \, d\bbp_{\! I^{\mathsf{c}}} \Big)^p \mik
\int |g_{\bx}-h_{\bx}|^p \, d\bbp_{\! I^{\mathsf{c}}}
\end{equation}
and so, taking the average over all $\bx\in \bbo_I$ and using \eqref{e3.9}, we obtain the desired estimate.
\end{proof}
We are ready to complete the proof of Theorem \ref{t1}.
\begin{proof}[Proof of Theorem \emph{\ref{t1}}]
We fix $f\in L_p(\bbo,\bcalf,\bbp)$ with $\|f\|_{L_p}\mik 1$ and we set
\begin{equation} \label{e3.11}
\theta=\ee^{\frac{p+1}{p}}.
\end{equation}
Since $n\meg 2/c(\ee,p)$, by \eqref{e1.3} and \eqref{e3.11}, we see that $n\meg 8\, \theta^{-2}(p-1)^{-1}$.
Hence, by Lemma \ref{l5} applied to the random variable $f$ and the filtration $(\cals_m)_{m=1}^n$,
there exist $i,j\in\{1,\dots,n-1\}$ satisfying \eqref{e3.3} and such that
\begin{equation} \label{e3.12}
\|\ave(f \, | \, \cals_j)-\ave(f \, | \, \cals_i)\|_{L_p}\mik \theta.
\end{equation}
We set $J=\{i+1,\dots,j\}$ and we claim that the interval $J$ is as desired. To this end notice, first, that $J^{\mathsf{c}}\neq\emptyset$.
Moreover, by \eqref{e3.3} and the choice of $c(\ee,p)$ and $\theta$ in \eqref{e1.3} and \eqref{e3.11} respectively, we have
\begin{equation} \label{e3.13}
|J|=j-i\meg \big( 4^{-1} \theta^2 (p-1) \big)\, n = c(\ee,p) \, n.
\end{equation}
Next, let $I$ be a nonempty subset of $J$ and set
\begin{equation} \label{e3.14}
g=\ave(f \, | \, \cals_j) \ \text{ and } \ h=\ave(f \, | \, \cals_i).
\end{equation}
We have the following claim.
\begin{claim} \label{c7}
For every $\bx\in\bbo_I$ we have $\ave(g_{\bx})=\ave(f_{\bx})$ and $\ave(h_{\bx})=\ave(f)$.
\end{claim}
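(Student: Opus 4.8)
The plan is to unwind $g$ and $h$ as iterated integrals and then invoke Fubini's theorem, the whole point being that the inclusion $I\subseteq J=\{i+1,\dots,j\}$ forces $I$ to be disjoint from both $\{1,\dots,i\}$ and $\{j+1,\dots,n\}$. First I would record the coordinatewise meaning of the two conditional expectations: since $\cals_m$ is the lift to $\bbo$ of the $\sigma$-algebra $\bcalf_m$ of the first $m$ coordinates, the function $\ave(f\,|\,\cals_m)$ depends only on the coordinates in $\{1,\dots,m\}$ and is obtained from $f$ by averaging out the coordinates in $\{m+1,\dots,n\}$; that is,
\[
\ave(f\,|\,\cals_m)(\omega_1,\dots,\omega_n)=\int_{\bbo_{\{m+1,\dots,n\}}} f(\omega_1,\dots,\omega_m,\bz)\,d\bbp_{\{m+1,\dots,n\}}(\bz).
\]
In particular $h=\ave(f\,|\,\cals_i)$ depends only on the coordinates $\{1,\dots,i\}$, while $g=\ave(f\,|\,\cals_j)$ depends only on $\{1,\dots,j\}$.

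For the equality $\ave(h_{\bx})=\ave(f)$, I would use that $I\cap\{1,\dots,i\}=\emptyset$, so $h$ does not involve any coordinate indexed by $I$. Consequently the section $h_{\bx}$ is independent of $\bx$ and depends only on the $\{1,\dots,i\}$-block of its argument in $\bbo_{I^{\mathsf{c}}}$ (note $\{1,\dots,i\}\subseteq I^{\mathsf{c}}$). Integrating over $\bbo_{I^{\mathsf{c}}}$ and discarding the coordinates on which $h_{\bx}$ does not depend collapses, by Fubini, to $\int h\,d\bbp$, which by the tower property equals $\ave\big(\ave(f\,|\,\cals_i)\big)=\ave(f)$.

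For the equality $\ave(g_{\bx})=\ave(f_{\bx})$, the relevant disjointness is $I\cap\{j+1,\dots,n\}=\emptyset$, so the block $\{j+1,\dots,n\}$ that is averaged out in forming $g$ lies entirely in $I^{\mathsf{c}}$. Writing $\ave(g_{\bx})=\int_{\bbo_{I^{\mathsf{c}}}} g(\bx,\by)\,d\bbp_{\!I^{\mathsf{c}}}(\by)$ and substituting the formula above for $g$, the outer average over $\bbo_{I^{\mathsf{c}}}$ and the inner average over $\bbo_{\{j+1,\dots,n\}}$ recombine, again by Fubini, into a single integral of $f$ over all $I^{\mathsf{c}}$-coordinates with the $I$-coordinates frozen at $\bx$; this single integral is exactly $\ave(f_{\bx})$.

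I expect no genuine obstacle here: the argument is entirely a bookkeeping of which coordinate blocks are frozen and which are averaged. The only points requiring care are to verify the two disjointness relations coming from $I\subseteq J$, and to make sure Fubini is applied so as to \emph{merge} (rather than merely swap) the nested averages; once the conditional expectations are written out coordinatewise, both identities fall out immediately.
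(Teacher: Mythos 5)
Your proposal is correct and follows essentially the same route as the paper: both arguments rest on the coordinatewise description of $\ave(f\,|\,\cals_m)$ as the average of $f$ over the coordinates in $\{m+1,\dots,n\}$, together with the two disjointness relations $I\cap\{1,\dots,i\}=\emptyset$ and $I\cap\{j+1,\dots,n\}=\emptyset$ coming from $I\subseteq J$, after which Fubini merges the nested averages exactly as you describe. The paper writes the second identity as an iterated integral $\int\ave(f_{\bz})\,d\bbp_{\{1,\dots,i\}}=\ave(f)$ rather than invoking the tower property, but this is the same computation.
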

\begin{proof}[Proof of Claim \emph{\ref{c7}}]
Fix $\bx\in \bbo_I$ and set $\mathcal{I}=\{1,\dots,i\}$ and $\mathcal{J}=\{1,\dots,j\}$.

First we argue to show that $\ave(g_{\bx})=\ave(f_{\bx})$. Indeed, observe that $I\subseteq J\subseteq \mathcal{J}$ and so,
by \eqref{e3.14} and Fubini's theorem, we see that for every $\by\in \bbo_{\mathcal{J}\setminus I}$ the function
$g_{(\bx,\by)}\colon \bbo_{\mathcal{J}^{\mathsf{c}}}\to \rr$ is constant and equal to $\ave(f_{(\bx,\by)})$. Therefore,
\begin{eqnarray} \label{e3.15}
\ave(g_{\bx})=\int g_{\bx} \, d\bbp_{\! I^{\mathsf{c}}} & = &
\int \Big(\int g_{(\bx,\by)} \, d\bbp_{\! \mathcal{J}^{\mathsf{c}}} \Big) \, d\bbp_{\! \mathcal{J}\setminus I} \\
& = & \int \ave(f_{(\bx,\by)}) \, d\bbp_{\! \mathcal{J}\setminus I} \nonumber \\
& = & \int\Big(\int f_{(\bx,\by)} \, d\bbp_{\! \mathcal{J}^{\mathsf{c}}}\Big) \, d\bbp_{\! \mathcal{J}\setminus I} \nonumber \\
& = & \int f_{\bx} \, d\bbp_{\! I^{\mathsf{c}}}= \ave(f_{\bx}). \nonumber
\end{eqnarray}

We proceed to show that $\ave(h_{\bx})=\ave(f)$. As above we notice that, by \eqref{e3.14} and Fubini's theorem, for every
$\bz\in \bbo_{\mathcal{I}}$ the function $h_{\bz}\colon\bbo_{\mathcal{I}^{\mathsf{c}}}\to \rr$ is constant and equal to $\ave(f_{\bz})$.
Since $\mathcal{I}\cap I=\emptyset$, the function $h_{(\bx,\bz)}\colon\bbo_{(\mathcal{I}\cup I)^{\mathsf{c}}}\to \rr$ is also constant
and equal to $\ave(f_\bz)$. Hence,
\begin{eqnarray} \label{e3.16}
\ave(h_{\bx}) = \int h_{\bx} \, d\bbp_{\! I^{\mathsf{c}}} & = &
\int\Big(\int h_{(\bx,\bz)} \,  d\bbp_{\! (\mathcal{I}\cup I)^{\mathsf{c}}}\Big) \, d\bbp_{\! \mathcal{I}} \\
& = & \int \ave(f_{\bz}) \, d\bbp_{\! \mathcal{I}} = \ave(f) \nonumber
\end{eqnarray}
and the proof of Claim \ref{c7} is completed.
\end{proof}
By Claim \ref{c7}, for every $\bx\in\bbo_I$ we have
\begin{equation} \label{e3.17}
|\ave(f_{\bx})-\ave(f)| = \big| \int (g_{\bx}-h_{\bx}) \, d\bbp_{\! I^{\mathsf{c}}}\big| \mik \|g_{\bx}-h_{\bx}\|_{L_1}
\end{equation}
and so
\begin{equation} \label{e3.18}
\int |\ave(f_{\bx})-\ave(f)|^p \, d\bbp_{\! I} \mik \int \|g_{\bx}-h_{\bx}\|^p_{L_1} \, d\bbp_{\! I}.
\end{equation}
It follows by Lemma \ref{l6}, \eqref{e3.12}, \eqref{e3.14} and the previous estimate that
\begin{equation} \label{e3.19}
\int |\ave(f_{\bx})-\ave(f)|^p \, d\bbp_{\! I} \mik \theta^p.
\end{equation}
Therefore, by Markov's inequality, we conclude that
\begin{equation} \label{e3.20}
\bbp_{\! I} \big( \{ \bx\in\bbo_I: |\ave(f_{\bx})-\ave(f)| \meg \theta^{\frac{p}{p+1}} \} \big)
\mik \frac{\theta^p}{\theta^{p^2/(p+1)}} = \theta^{\frac{p}{p+1}}
\end{equation}
which is equivalent to saying, by the choice of $\theta$ in \eqref{e3.11}, that
\begin{equation} \label{e3.21}
\bbp_{\! I} \big( \{ \bx\in\bbo_I: |\ave(f_{\bx})-\ave(f)| \mik \ee \} \big) \meg 1-\ee.
\end{equation}
The proof of Theorem \ref{t1} is completed.
\end{proof}


\section{Comments}

\subsection*{4.1}

For every positive integer $n$ and every finite set $A$ with $|A|\meg 2$ let
\begin{equation} \label{e4.1}
A^n=\{(a_1,\dots,a_n): a_1,\dots, a_n\in A\}
\end{equation}
and let $\mathbb{P}$ be the uniform probability measure on the hypercube $A^n$. Moreover, for every nonempty
subset $I$ of $\{1,\dots,n\}$ by $\mathbb{P}_{\! A^I}$ we denote the uniform probability measure on
$A^I\coloneqq \{(a_i)_{i\in I}: a_i\in A \text{ for every } i\in I\}$. We have the following lemma.
\begin{lem} \label{l8}
Let $k,m$ be positive integers with $k\meg 2$ and $0<\eta\mik 1$. Also let $A$ be a set with $|A|=k$ and let $n$ be a positive integer with
\begin{equation} \label{e4.2}
n\meg \frac{16m k^{3m}}{\eta^3}
\end{equation}
Then for every subset $D$ of $A^n$ there exists an interval $I\subseteq \{1,\dots,n\}$ with $|I|=m$ such that for every $t\in A^I$ we have
\begin{equation} \label{e4.3}
|\mathbb{P}_{\! A^{I^{\mathsf{c}}}}(D_t)-\mathbb{P}(D)|\mik \eta
\end{equation}
where  $D_t=\{s\in A^{I^{\mathsf{c}}}: (t,s)\in D\}$ is the section of $D$ at $t$.
\end{lem}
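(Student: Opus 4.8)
The plan is to derive Lemma~\ref{l8} from Corollary~\ref{c2} applied to the indicator function of $D$, exploiting the crucial feature of the uniform hypercube that every singleton in $A^I$ is an atom of mass $k^{-|I|}$. This atomic structure is precisely what allows us to upgrade the ``most sections are good'' conclusion of Corollary~\ref{c2} into the ``every section is good'' conclusion demanded here: a subset of $A^I$ of measure strictly smaller than $k^{-m}$ must be empty.

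Concretely, I would first dispose of the trivial case $\eta=1$, where both $\mathbb{P}_{\! A^{I^{\mathsf{c}}}}(D_t)$ and $\mathbb{P}(D)$ lie in $[0,1]$, so that \eqref{e4.3} holds automatically for any interval $I$ of length $m$ (such an $I$ exists since $n>m$). Assuming then $\eta<1$, I would specialize the general framework by taking $\Omega_i=A$ equipped with the uniform measure for each $i$, so that the product measure is precisely $\mathbb{P}$ and, for each coordinate set $I$, one has $\mathbf{P}_{\!I}=\mathbb{P}_{\!A^I}$ and $\mathbf{P}_{\!I^{\mathsf{c}}}=\mathbb{P}_{\!A^{I^{\mathsf{c}}}}$. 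I would set $p=2$ and
\[
\ee=\eta\, k^{-m},
\]
and apply Corollary~\ref{c2} to the event $A=D$. Note that $0<\ee\mik 1$ and $1<p\mik 2$, as required.

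The main bookkeeping step is to verify that the hypothesis $n\meg 16\,m\,k^{3m}\eta^{-3}$ suffices. With this choice one computes $c(\ee,2)=\tfrac14\ee^{3}=\tfrac14\eta^{3}k^{-3m}$, whence $2/c(\ee,2)=8\,k^{3m}\eta^{-3}$ and $m/c(\ee,2)=4\,m\,k^{3m}\eta^{-3}$, both bounded above by $16\,m\,k^{3m}\eta^{-3}\mik n$. Thus, on the one hand $n\meg 2/c(\ee,2)$, so Corollary~\ref{c2} applies and produces an interval $J$ with $J^{\mathsf{c}}\neq\emptyset$ and $|J|\meg c(\ee,2)\,n\meg m$; on the other hand, since $J$ is an interval of length at least $m$, it contains a subinterval $I$ with $|I|=m$ (and $I^{\mathsf{c}}\neq\emptyset$ because $n>m$). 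For this $I$ Corollary~\ref{c2} yields
\[
\mathbb{P}_{\!A^I}\big(\{t\in A^I:\ |\mathbb{P}_{\!A^{I^{\mathsf{c}}}}(D_t)-\mathbb{P}(D)|\mik \ee\,\mathbb{P}(D)^{1/2}\}\big)\meg 1-\ee.
\]

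The final step is the atomic argument, which I regard as the conceptual heart of the proof. The complementary ``bad'' set $B=\{t\in A^I:\ |\mathbb{P}_{\!A^{I^{\mathsf{c}}}}(D_t)-\mathbb{P}(D)|> \ee\,\mathbb{P}(D)^{1/2}\}$ has $\mathbb{P}_{\!A^I}(B)\mik \ee=\eta\, k^{-m}<k^{-m}$, the strict inequality using $\eta<1$. Since $A^I$ carries the uniform measure, every nonempty subset of it has measure at least $k^{-m}$; hence $B=\emptyset$, i.e.\ \emph{every} $t\in A^I$ satisfies $|\mathbb{P}_{\!A^{I^{\mathsf{c}}}}(D_t)-\mathbb{P}(D)|\mik \ee\,\mathbb{P}(D)^{1/2}\mik \ee\mik\eta$, which is exactly \eqref{e4.3}. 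The only real subtlety is the balancing act in the choice of $\ee$: it must be small enough ($\ee<k^{-m}$) to annihilate the bad set, yet not so small that the required size of $n$ exceeds the stated bound; the choice $\ee=\eta\,k^{-m}$ threads this needle, while the accuracy $\ee\,\mathbb{P}(D)^{1/2}\mik\eta$ comes for free since $\mathbb{P}(D)^{1/2}\mik 1$.
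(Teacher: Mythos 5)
Your proof is correct and follows essentially the same route as the paper: apply Corollary~\ref{c2} with $p=2$ and $\ee$ of order $\eta k^{-m}$, check that \eqref{e4.2} gives $n\meg m\big(2/c(\ee,2)\big)$, and then use the atomicity of the uniform measure on $A^I$ to force the exceptional set to be empty. The only difference is cosmetic: the paper chooses $\ee=\eta k^{-m}2^{-1/3}$ so that $\ee<k^{-m}$ holds for every $\eta\mik 1$, whereas you take $\ee=\eta k^{-m}$ and dispose of the case $\eta=1$ separately.
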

A simpler version of Lemma \ref{l8} was proved in \cite{DKT1} and was used as a tool in a proof of the density Hales--Jewett
theorem \cite{FK}; closely related applications were also obtained in \cite{DKT2} (see also \cite{DK}). Of course, the main point
in Lemma \ref{l8} is that by demanding a large---but not necessarily dense---set $I$ of coordinates, one can upgrade Theorem \ref{t1}
and guarantee that the probability of \textit{every} section of $D$ along elements of $A^I$ is essentially equal to the probability of $D$.
We proceed to the proof.
\begin{proof}[Proof of Lemma \emph{\ref{l8}}]
We view $A$ and $A^n$ as discrete probability spaces equipped with their uniform probability measures. Then notice that the
probability space $A^n$ is the product of $n$ many copies of $A$. Next we set $\ee = \eta k^{-m}2^{-1/3}$ and we observe that,
by \eqref{e1.3} and \eqref{e4.2}, we have $n\meg m\big(2/c(\ee,2)\big)$. Hence, by Corollary \ref{c2} applied to the set $D$,
the constant $\ee$ and $p=2$, there exists an interval $J\subseteq \{1,\dots,n\}$ with $J^{\mathsf{c}}\neq \emptyset$ and
$|J|\meg 2m$, and satisfying \eqref{e1.7} for every nonempty $I\subseteq J$. We select an interval $I\subseteq J$ with $|I|=m$
and we claim that $I$ is as desired. Indeed, by the choice of $\ee$, we have
\begin{equation} \label{e4.4}
\mathbb{P}_{\! A^I}\big( \{ t\in A^I: |\mathbb{P}_{\! A^{I^{\mathsf{c}}}}(D_t)-\mathbb{P}(D)|\mik \ee \} \big) \meg 1-\ee
\meg 1- k^{-m}2^{-1/3} > 1-\frac{1}{|A^I|}
\end{equation}
which implies that $|\mathbb{P}_{\! A^{I^{\mathsf{c}}}}(D_t)-\mathbb{P}(D)|\mik \ee$ for every $t\in A^I$. Since $\ee\mik\eta$
we conclude that the estimate in \eqref{e4.3} is satisfied and the proof is completed.
\end{proof}

\subsection*{4.2}

There is a natural extension of Theorem \ref{t1} which deals simultaneously with a family of random variables.
Although in applications one usually encounters only finite families of random variables (see, e.g., \cite{DKT2}),
the cleanest formulation of this extension is for stochastic processes indexed by the sample space of a probability
space $(T,\Sigma,\mu)$. Specifically, we have the following theorem.
\begin{thm} \label{t9}
Let $0<\ee\mik 1$ and $1<p\mik 2$, and set
\begin{equation} \label{e4.5}
c'(\ee,p) = \frac{1}{4}\, \ee^{\frac{2(2p+1)}{p}} (p-1).
\end{equation}
Also let $n$ be a positive integer with $n\meg 2/c'(\ee,p)$ and let $(\bbo,\bcalf,\bbp)$ be the product of a finite sequence
$(\Omega_1,\calf_1,\mathbb{P}_1),\dots,(\Omega_n,\calf_n,\mathbb{P}_n)$ of probability spaces. Finally, let $(T,\Sigma,\mu)$
be a probability space and $F\colon T\times \bbo\to \rr$ a random variable with $\|F\|_{L_p}\mik 1$. Then there exist
$G\in\Sigma$ with $\mu(G)\meg 1-\ee$ and an interval $J$ of $\{1,\dots,n\}$  with $J^{\mathsf{c}}\neq\emptyset$ and
\begin{equation} \label{e4.6}
|J|\meg c'(\ee,p)\, n
\end{equation}
such that for every $t\in G$ and every nonempty $I\subseteq J$ we have
\begin{equation} \label{e4.7}
\mathbf{P}_{\!I} \big( \{ \mathbf{x}\in\boldsymbol{\Omega}_I: |\ave(F_{(t,\bx)}) - \ave(F_t)|\mik \ee \}\big) \meg 1-\ee.
\end{equation}
\end{thm}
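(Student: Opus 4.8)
The plan is to reduce Theorem~\ref{t9} to a single application of Lemma~\ref{l5} on the enlarged product space $T\times\bbo$, followed by two applications of Markov's inequality. First I would equip $T\times\bbo$ with the product measure $\mu\times\bbp$ and, for each $m\in\{1,\dots,n\}$, consider the sub-$\sigma$-algebra $\cala_m=\Sigma\otimes\cals_m$ obtained by fully revealing the $T$-coordinate together with the first $m$ coordinates of $\bbo$, where $(\cals_m)_{m=1}^n$ is the filtration from the proof of Theorem~\ref{t1}. Then $(\cala_m)_{m=1}^n$ is an increasing sequence of sub-$\sigma$-algebras, and by Fubini's theorem the fiber of $\ave(F\,|\,\cala_m)$ at $t$ equals $\ave(F_t\,|\,\cals_m)$, the conditional expectation computed inside $\bbo$.

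Next I would set $\theta'=\ee^{\frac{2p+1}{p}}$ and observe that the hypothesis $n\meg 2/c'(\ee,p)$ is exactly $n\meg 8\,(\theta')^{-2}(p-1)^{-1}$, since $(\theta')^2=\ee^{\frac{2(2p+1)}{p}}$. Hence Lemma~\ref{l5} applies to $F$ (which satisfies $\|F\|_{L_p}\mik 1$) and the filtration $(\cala_m)_{m=1}^n$, producing $i,j\in\{1,\dots,n-1\}$ with $j-i\meg\big(4^{-1}(\theta')^2(p-1)\big)n=c'(\ee,p)\,n$ and $\|\ave(F\,|\,\cala_j)-\ave(F\,|\,\cala_i)\|_{L_p}\mik\theta'$. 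I would then take $J=\{i+1,\dots,j\}$, so that $J^{\mathsf{c}}\neq\emptyset$ (as $j\mik n-1$) and $|J|\meg c'(\ee,p)\,n$, which gives \eqref{e4.6}.

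The point of the enlarged space is that this one martingale estimate controls the $L_p$-average over $t$ of the fiberwise differences. Writing $\phi(t)=\|\ave(F_t\,|\,\cals_j)-\ave(F_t\,|\,\cals_i)\|_{L_p}$, Fubini gives $\int\phi^p\,d\mu=\|\ave(F\,|\,\cala_j)-\ave(F\,|\,\cala_i)\|_{L_p}^p\mik(\theta')^p=\ee^{2p+1}$. Setting $\theta=\ee^{\frac{p+1}{p}}$ and $G=\{t\in T:\phi(t)\mik\theta\}$, Markov's inequality yields $\mu(T\setminus G)\mik(\theta')^p/\theta^p=\ee^{2p+1}/\ee^{p+1}=\ee^{p}\mik\ee$, so $\mu(G)\meg 1-\ee$. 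It then remains to verify \eqref{e4.7} for fixed $t\in G$ and nonempty $I\subseteq J$. Here the decisive observation is that the portion of the proof of Theorem~\ref{t1} after \eqref{e3.12} never uses $\|f\|_{L_p}\mik 1$: it uses only that $g=\ave(f\,|\,\cals_j)$ and $h=\ave(f\,|\,\cals_i)$ satisfy $\|g-h\|_{L_p}\mik\theta$. Applying that argument verbatim with $f=F_t$—for which $\|g-h\|_{L_p}=\phi(t)\mik\theta$ since $t\in G$, and for which $(F_t)_{\bx}=F_{(t,\bx)}$—together with Claim~\ref{c7}, Lemma~\ref{l6} and Markov's inequality, delivers $\bbp_{\!I}\big(\{\bx\in\bbo_I:|\ave(F_{(t,\bx)})-\ave(F_t)|\mik\ee\}\big)\meg 1-\ee$, which is \eqref{e4.7}.

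The main obstacle is conceptual rather than computational: one must recognize that lifting to $T\times\bbo$ allows Lemma~\ref{l5} to bound the $L_p$-average over $t$ of the fiberwise martingale differences in a single stroke, so that Markov isolates a large good set $G$ and the per-fiber conclusion follows with no input beyond Theorem~\ref{t1}'s own machinery. The two-threshold bookkeeping—$\theta'$ governing the length of $J$ through Lemma~\ref{l5}, and $\theta$ governing the fiberwise concentration exactly as in Theorem~\ref{t1}—is precisely what replaces $c(\ee,p)$ by the smaller constant $c'(\ee,p)$.
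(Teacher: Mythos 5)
Your proposal is correct and follows essentially the same route as the paper: lift to $T\times\bbo$ with the tensor filtration $\Sigma\otimes\cals_m$, apply Lemma~\ref{l5} with $\theta'=\ee^{(2p+1)/p}$ to get $J$, use Fubini to convert the single martingale estimate into a bound on the $\mu$-average of the fiberwise discrepancies, and then apply Markov twice (once over $t$ to extract $G$, once over $\bx$ for each $t\in G$). The only cosmetic difference is that you track the fiberwise $L_p$-norm $\phi(t)=\|g_t-h_t\|_{L_p}$ and rerun the tail of the proof of Theorem~\ref{t1}, whereas the paper tracks $\|g_t-h_t\|_{L_1}$ via Lemma~\ref{l6} and thresholds at $\ee^2$; both give the stated conclusion.
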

The proof of Theorem \ref{t9} is similar to the proof of Theorem \ref{t1} and so we will briefly sketch the argument.
First, for every $m\in\{1,\dots,n\}$ we define
\begin{equation} \label{e4.8}
\widetilde{\cals}_m= \Sigma\otimes \cals_m
\end{equation}
where $\cals_m$ is as in \eqref{e3.1}. Each $\widetilde{\cals}_m$ is a sub-$\sigma$-algebra of $\Sigma\otimes\bcalf$ and, moreover,
the finite sequence $(\widetilde{\cals}_m)_{m=1}^n$ is increasing. Hence, by Lemma \ref{l5} applied to $F$,
the filtration $(\widetilde{\cals}_m)_{m=1}^n$ and $\theta=\ee^{(2p+1)/p}$, there exist $i,j\in \{1,\dots,n-1\}$ with
\begin{equation} \label{e4.9}
j-i \meg \big( 4^{-1}\, \theta^2 (p-1)\big)\, n \stackrel{\eqref{e4.5}}{=} c'(\ee,p)\, n
\end{equation}
and such that
\begin{equation} \label{e4.10}
\| \ave(F \, | \, \widetilde{\cals}_j) - \ave(F \, | \, \widetilde{\cals}_i)\|_{L_p} \mik \ee^{\frac{2p+1}{p}}.
\end{equation}
Set $g=\ave(F \, | \, \widetilde{\cals}_j)$ and $h=\ave(F \, | \, \widetilde{\cals}_i)$, and notice that,
by Lemma \ref{l6} and \eqref{e4.10},
\begin{equation} \label{e4.11}
\int \|g_t-h_t\|_{L_1}^p \, d\mu \mik \ee^{2p+1}.
\end{equation}
Therefore, by Markov's inequality, there exists $G\in\Sigma$ with $\mu(G)\meg 1-\ee$ such that
\begin{equation} \label{e4.12}
\|g_t-h_t\|_{L_1}\mik \ee^2
\end{equation}
for every $t\in G$. The set $G$ and the interval $J\coloneqq \{i+1,\dots,j\}$ are as desired. Indeed, let $I$ be a nonempty subset of $J$.
Observe that $g_t=\ave(F_t \, | \, \cals_j)$ and $h_t=\ave(F_t \, | \, \cals_i)$ for every $t\in T$ which implies, by Claim \ref{c7}, that
$\ave(g_{t,\bx})=\ave(F_{t,\bx})$ and $\ave(h_{t,\bx})=\ave(F_t)$ for every $t\in T$ and every $\bx\in\bbo_I$. Taking into account these
observations, we conclude that the estimate in \eqref{e4.7} follows from \eqref{e4.12} and a second application of Markov's inequality.

\subsection*{4.3}

Recall that a Banach space $X$ is said to be \textit{uniformly convex} if for every $\ee>0$ there exists $\delta>0$ such that for
every $x,y\in X$ with $\|x\|_X=\|y\|_X=1$ and $\|x-y\|_X\meg \ee$ we have that $\|(x+y)/2\|_X\mik 1-\delta$. A classical result due to
James \cite{J} and, independently, V. Gurarii and N. Gurarii \cite{GG}, implies that for every uniformly convex Banach space $X$ and
every $p>1$ there exist $q\meg 2$ and a constant $C>0$ such that for every $X$-valued martingale difference sequence $(d_i)_{i=1}^n$ we have
\begin{equation} \label{e4.13}
\Big( \sum_{i=1}^n \|d_i\|_{L_p(X)}^q \Big)^{1/q} \mik C \big\| \sum_{i=1}^n d_i \big\|_{L_p(X)}.
\end{equation}
(See, also, \cite{Pi} for a proof and a detailed presentation of related material.) Using this estimate and arguing precisely as in Section 3,
we obtain the following vector-valued version of Theorem \ref{t1}.
\begin{thm} \label{t10}
For every uniformly convex Banach space $X$, every $0<\ee\mik 1$ and every $p>1$ there exists a constant $c(X,\ee,p)>0$ with the following
property. Let $n$ be a positive integer with $n\meg c(X,\ee,p)^{-1}$ and let $(\bbo,\bcalf,\bbp)$ be the product of a finite sequence
$(\Omega_1,\calf_1,\mathbb{P}_1),\dots,(\Omega_n,\calf_n,\mathbb{P}_n)$ of probability spaces. If $f\colon\bbo\to X$ is a random variable
with $\|f\|_{L_p(X)}\mik 1$, then there exists an interval $J$ of $\{1,\dots,n\}$ with $J^{\mathsf{c}}\neq\emptyset$ and
\begin{equation} \label{e4.14}
|J| \meg c(X,\ee,p)\, n
\end{equation}
such that for every nonempty $I\subseteq J$ we have
\begin{equation} \label{e4.15}
\mathbf{P}_{\!I} \big( \{ \mathbf{x}\in\boldsymbol{\Omega}_I: \|\ave(f_{\bx}) - \ave(f)\|_X\mik \ee \}\big) \meg 1-\ee.
\end{equation}
\end{thm}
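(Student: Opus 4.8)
The plan is to transcribe the proof of Theorem~\ref{t1} from Section~3 almost verbatim, replacing the scalar spaces $L_p$ by the Bochner spaces $L_p(X)$ and the martingale inequality of Proposition~\ref{p3} by the estimate~\eqref{e4.13}. Throughout, $\ave$ and all conditional expectations are understood in the Bochner sense; they exist for $X$-valued integrable functions and satisfy the usual tower and Fubini properties, exactly as in the scalar case. Fix $X$, $\ee$ and $p$, let $C>0$ and $q\meg 2$ be the constants furnished by~\eqref{e4.13} for the pair $(X,p)$, and set $\theta=\ee^{(p+1)/p}$. I would then define $c(X,\ee,p)=\tfrac{1}{2}\big(C^{-1}\theta\big)^q$, up to an inessential adjustment of the numerical factor ensuring that $n\meg c(X,\ee,p)^{-1}$ suffices below; this quantity plays the role played by $4^{-1}\theta^2(p-1)$ in Section~3.

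First I would establish the vector-valued analogue of Lemma~\ref{l5}: for an increasing filtration $(\cala_m)_{m=1}^n$ and $g\in L_p(X)$ with $\|g\|_{L_p(X)}\mik 1$, provided $n$ is large enough there exist $i,j\in\{1,\dots,n-1\}$ with $j-i\meg \tfrac12(C^{-1}\theta)^q\, n$ and $\|\ave(g\,|\,\cala_j)-\ave(g\,|\,\cala_i)\|_{L_p(X)}\mik\theta$. This is the energy-increment argument of Lemma~\ref{l5}. Assuming no such pair exists, I would partition $\{1,\dots,n-1\}$ into $\ell=\lfloor (C/\theta)^q\rfloor+1$ consecutive blocks with endpoints $i_1<\cdots<i_{\ell+1}$, so that each gap $i_{k+1}-i_k\meg n/(2\ell)$ exceeds the threshold and therefore each difference $d_{k+1}=\ave(g\,|\,\cala_{i_{k+1}})-\ave(g\,|\,\cala_{i_k})$ (with $d_1=\ave(g\,|\,\cala_{i_1})$) has $L_p(X)$-norm strictly greater than $\theta$. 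Since $(d_k)_{k=1}^{\ell+1}$ is a martingale difference sequence, applying~\eqref{e4.13} gives
\[ \ell^{1/q}\,\theta < \Big(\sum_{k=1}^{\ell+1}\|d_k\|_{L_p(X)}^q\Big)^{1/q} \mik C\,\big\|\sum_{k=1}^{\ell+1} d_k\big\|_{L_p(X)} = C\,\|\ave(g\,|\,\cala_{i_{\ell+1}})\|_{L_p(X)} \mik C, \]
which contradicts $\ell\meg (C/\theta)^q$. This is the only genuinely new computation, and the step I expect to require the most care, since the exponent is now $q$ rather than $2$ and $c(X,\ee,p)$ must be read off from $C$ and $q$.

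The two supporting results transfer with no change. The analogue of Lemma~\ref{l6}, namely $\int \|g_{\bx}-h_{\bx}\|_{L_1(X)}^p\,d\bbp_{\!I}\mik \|g-h\|_{L_p(X)}^p$ with $\|g_{\bx}-h_{\bx}\|_{L_1(X)}=\int\|g_{\bx}-h_{\bx}\|_X\,d\bbp_{\!I^{\mathsf{c}}}$, follows from Fubini's theorem for Bochner integrals together with Jensen's inequality applied to $t\mapsto t^p$, exactly as in~\eqref{e3.9}--\eqref{e3.10}. Similarly, Claim~\ref{c7} is a statement purely about iterated integrals of the lifted conditional expectations $\ave(f\,|\,\cals_j)$ and $\ave(f\,|\,\cals_i)$, so its proof goes through verbatim with $\ave$ interpreted as the $X$-valued expectation; thus, writing $g=\ave(f\,|\,\cals_j)$ and $h=\ave(f\,|\,\cals_i)$, one still has $\ave(g_{\bx})=\ave(f_{\bx})$ and $\ave(h_{\bx})=\ave(f)$ for every $\bx\in\bbo_I$.

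Finally I would assemble the pieces as in the proof of Theorem~\ref{t1}. Applying the vector-valued Lemma~\ref{l5} to $f$ and the filtration $(\cals_m)_{m=1}^n$ produces $i,j$ and hence the interval $J=\{i+1,\dots,j\}$ with $J^{\mathsf{c}}\neq\emptyset$ and $|J|\meg c(X,\ee,p)\,n$. For nonempty $I\subseteq J$, the triangle inequality for the Bochner integral gives, via Claim~\ref{c7},
\[ \|\ave(f_{\bx})-\ave(f)\|_X=\Big\|\int (g_{\bx}-h_{\bx})\,d\bbp_{\!I^{\mathsf{c}}}\Big\|_X\mik \|g_{\bx}-h_{\bx}\|_{L_1(X)}, \]
whence, by the vector-valued Lemma~\ref{l6} and the estimate $\|g-h\|_{L_p(X)}\mik\theta$, we obtain $\int\|\ave(f_{\bx})-\ave(f)\|_X^p\,d\bbp_{\!I}\mik \theta^p$. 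Markov's inequality then yields $\bbp_{\!I}\big(\{\bx\in\bbo_I:\|\ave(f_{\bx})-\ave(f)\|_X\meg\ee\}\big)\mik \theta^p/\ee^p=\ee^{p+1}/\ee^p=\ee$, which is precisely~\eqref{e4.15}. The whole difficulty is thus concentrated in the energy-increment step, the replacement of the Hilbertian exponent $2$ by $q$ being the only conceptual input beyond Section~3.
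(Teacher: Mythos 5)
Your proposal is correct and matches the paper's intended argument exactly: the paper proves Theorem \ref{t10} only by remarking that one argues ``precisely as in Section 3'' with the estimate \eqref{e4.13} in place of Proposition \ref{p3}, which is what you carry out in detail. The one detail to settle is the numerical constant: with $\ell=\lfloor (C/\theta)^q\rfloor+1$ the block length $\lfloor (n-2)/\ell\rfloor\meg n/(2\ell)$ is only guaranteed to exceed $\tfrac{1}{4}(C^{-1}\theta)^q\, n$ (using $\ell\mik 2(C/\theta)^q$), so one should take $c(X,\ee,p)=\tfrac{1}{4}(C^{-1}\theta)^q$ rather than $\tfrac{1}{2}(C^{-1}\theta)^q$ --- precisely the inessential adjustment you anticipated, and the analogue of the factor $4^{-1}$ in \eqref{e1.3}.
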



\end{document}